\newcommand{\bd}{\mathbf}
\newcommand{\beq}{\begin{eqnarray}}
\newcommand{\eeq}{\end{eqnarray}}
\title{Calculating determinants of block matrices}
\author{Philip D. Powell\footnotemark[2]}
\begin{document}
\maketitle

\footnotetext[2]{Department of Physics, University of Illinois at Urbana-Champaign, 1110 W. Green St., Urbana, IL 61801 (ppowell2@illinois.edu).}

\begin{abstract}
This paper presents a method for expressing the determinant of an $N \times N$ complex block matrix in terms of its constituent blocks.  The result allows one to reduce the determinant of a matrix with $N^2$ blocks to the product of the determinants of $N$ distinct combinations of single blocks.  This procedure proves useful in the analytic description of physical systems with multiple discrete variables, as it provides a systematic method for evaluating determinants which might otherwise be analytically intractable.
\end{abstract}

\begin{keywords} 
determinant, block matrix, partitioned matrix, Sch\"{u}r complement
\end{keywords}

\begin{AMS}
15A15, 15A09
\end{AMS}

\section{Introduction}
Block matrices are ubiquitous in physics and applied mathematics, appearing naturally in the description of systems with multiple discrete variables (e.g., quantum spin, quark color and flavor)~\cite{Abuki,Rossner,Sasaki}.  In addition, block matrices are exploited in many computational methods familiar to researchers of fluid dynamics~\cite{Cliffe, Murman}.  The need to calculate determinants of these matrices is almost equally widespread, for both analytical and numerical applications~\cite{Molinari,Popescu}.  For example, a model of high density quark matter must include color (3), flavor (2-6), and Dirac (4) indices, giving rise to a matrix between size $24 \times 24$ and $72 \times 72$, with any additional properties enlarging the matrix further~\cite{Abuki,Powell}.

The problem of calculating the determinant of a $2 \times 2$ block matrix has been long studied, and is a most important case, since it can be extended to any larger matrix in the same way that the determinant of an arbitrary square matrix can be expressed in terms of the determinants of $2 \times 2$ matrices, via minor expansion~\cite{Silvester}.  The solution to this problem can be obtained by considering the equation
\beq
\begin{bmatrix}
\bd{A} & \bd{B} \\
\bd{C} & \bd{D}
\end{bmatrix}
\begin{bmatrix}
\bd{I} & \bd{0} \\
-\bd{D}^{-1} \bd{C} & \bd{I}
\end{bmatrix} = 
\begin{bmatrix}
\bd{A - BD}^{-1} \bd{C} & \bd{B} \\
\bd{0} & \bd{D}
\end{bmatrix}   ,   \label{eq:2x2block}
\eeq
where $\bd{A}$, $\bd{B}$, $\bd{C}$, and $\bd{D}$ are $k \times k$, $k \times (N-k)$, $(N-k) \times k$, and $(N-k) \times (N-k)$ matrices respectively, $\bd{I}$ and $\bd{0}$ are the identity and zero matrix respectively (taken to be of the appropriate dimension), and we have assumed that $\bd{D}$ is invertible.  If $\bd{D}^{-1}$ does not exist, Eq. (\ref{eq:2x2block}) can be rewritten in terms of $\bd{A}^{-1}$, with the right side lower-triangular~\cite{Tian}.  If neither inverse exists, the notion of generalized inverses must be employed~\cite{Moore,Penrose,Meyer}.  We do not consider these complications in the present analysis, but rather simply assume the existence of all inverses that arise in the following calculations.  In practice, this assumption is of nearly no consequence, since the blocks will generally be functions of at least one continuous variable (e.g., momentum), and any singularities will be isolated points of measure zero.

Taking the determinant of Eq. (\ref{eq:2x2block}) yields the result~\cite{Silvester}
\beq
\det \left( \begin{matrix}
\bd{A} & \bd{B} \\
\bd{C} & \bd{D}
\end{matrix} \right) = \det \left( \bd{A - BD}^{-1} \bd{C} \right) \det \left( \bd{D} \right)   ,   \label{eq:2x2det}
\eeq
where we have exploited the fact that the determinant of a block tringular matrix is the product of the determinants of its diagonal blocks.  The matrices appearing on the right side of Eq. (\ref{eq:2x2det}) are the $(N-k) \times (N-k)$ matrix $\bd{D}$, and the $k \times k$ Sch\"{u}r complement with respect to $\bd{D}$~\cite{Ouellette,Zhang}.

While Eq. (\ref{eq:2x2det}) has proven immensely useful, there remain many applications in which this result is not ideally suited.  For example, there are situations in which physics dictates that we partition a matrix in a form different than in Eq. (\ref{eq:2x2block}).  The description of intermediate-energy quark matter in quantum chromodynamics, for instance, involves propagators which possess four Dirac indices, three color indices and three flavor indices.  While one could, in principle, partition the resulting $36 \times 36$ matrix as shown in Eq. (\ref{eq:2x2det}), such a partitioning would treat the Dirac indices asymmetrically, eliminating two of the indices while retaining two others.  While such a division is of no consequence in computational applications, it is quite undesirable in analytical descriptions.  Thus, we are led to desire a generalization of Eq. (\ref{eq:2x2det}) which holds for a block matrix of more general partitioning.

The remainder of this paper is organized as follows.  In Sec.~\ref{sec:directcalc}, a direct, ``brute force" calculation of the determinant of an $N \times N$ complex block matrix is presented, which manifests clearly the emergence of a Sch\"{u}r complement structure.  In Sec.~\ref{sec:induction}, an alternate proof of our result is given, which is somewhat more elegant, but perhaps less intuitive, by means of induction on $N$.  Finally, in Sec.~\ref{sec:examples}, some applications of our result are discussed, including the cases $N = 2$ and $N = 3$, as well as a $48 \times 48$ matrix with $N = 6$, which arises in the study of quark matter.

\section{Direct Calculation \label{sec:directcalc}}

\hspace{0mm}

\begin{theorem}
Let $\bd{S}$ be an $(nN) \times (nN)$ complex matrix, which is partitioned into $N^2$ blocks, each of size $n \times n$:
\beq
\bd{S} = \begin{bmatrix}
		\bd{S}_{11} & \bd{S}_{12} & \cdots & \bd{S}_{1N} \\
		\bd{S}_{21} & \bd{S}_{22} & \cdots & \bd{S}_{2N} \\
		\vdots  & \vdots  & \ddots & \vdots  \\
		\bd{S}_{N1} & \bd{S}_{N2} & \cdots & \bd{S}_{NN}
	 \end{bmatrix}   .   \label{eq:Smatrix}
\eeq
Then the determinant of $\bd{S}$ is given by
\beq
\det(\bd{S}) = \prod^N_{k=1} \det (\bm{\alpha}^{(N-k)}_{kk})   ,
\eeq
where the $\alpha^{(k)}$ are defined by
\beq
\bm{\alpha}^{(0)}_{ij} & = & \bd{S}_{ij}   \nonumber   \\
\bm{\alpha}^{(k)}_{ij} & = & \bd{S}_{ij} - \bm{\sigma}^T_{i,N-k+1} \tilde{\bd{S}}^{-1}_{k} \bd{s}_{N-k+1,j}   , \hspace{10mm}   k \geq 1,   \label{eq:alphas}
\eeq
and the vectors $\bm{\sigma}^T_{ij}$ and $\bd{s}_{ij}$ are
\beq
\bd{s}_{ij} = \left(\begin{array}{cccc} \bd{S}_{ij} & \bd{S}_{i+1,j} & \cdots & \bd{S}_{Nj} \end{array} \right)^T   , \hspace{3mm} 
\bm{\sigma}^T_{ij} = \left(\begin{array}{cccc} \bd{S}_{ij} & \bd{S}_{i,j+1} & \cdots & \bd{S}_{iN} \end{array} \right)   .   \label{eq:vecs}
\eeq

\end{theorem}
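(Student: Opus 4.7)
My plan is to iterate the $2 \times 2$ block determinant formula~(\ref{eq:2x2det}), peeling off one block row and column at a time from the top-left corner of progressively smaller trailing submatrices of $\bd{S}$.  For notational convenience I introduce $\tilde{\bd{S}}_k$ for the bottom-right $(kn)\times(kn)$ submatrix of $\bd{S}$ --- that is, the block matrix formed by those $\bd{S}_{ij}$ with $i,j \geq N-k+1$ --- so that $\tilde{\bd{S}}_N = \bd{S}$ and $\tilde{\bd{S}}_1 = \bd{S}_{NN}$.  The paper's standing invertibility assumption will be taken to cover every $\tilde{\bd{S}}_k$.

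At step $m$ I would regard $\tilde{\bd{S}}_{N-m+1}$ as a $2\times 2$ partitioned matrix whose $(1,1)$ corner is $\bd{S}_{mm}$, whose $(1,2)$ block is the row $\bm{\sigma}^T_{m,m+1}$, whose $(2,1)$ block is the column $\bd{s}_{m+1,m}$, and whose $(2,2)$ block is $\tilde{\bd{S}}_{N-m}$.  Substituting into~(\ref{eq:2x2det}) gives
\[
\det(\tilde{\bd{S}}_{N-m+1}) \;=\; \det(\tilde{\bd{S}}_{N-m})\,\det\!\bigl(\bd{S}_{mm} - \bm{\sigma}^T_{m,m+1}\,\tilde{\bd{S}}_{N-m}^{-1}\,\bd{s}_{m+1,m}\bigr),
\]
and by definition~(\ref{eq:alphas}) the Schur complement on the right is precisely $\bm{\alpha}^{(N-m)}_{mm}$.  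Chaining these identities for $m = 1, \ldots, N-1$ and closing with $\tilde{\bd{S}}_1 = \bd{S}_{NN} = \bm{\alpha}^{(0)}_{NN}$ produces the telescoping product
\[
\det(\bd{S}) \;=\; \prod_{m=1}^{N} \det\!\bigl(\bm{\alpha}^{(N-m)}_{mm}\bigr),
\]
which is the claimed formula.

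The principal obstacle is index bookkeeping rather than anything deep.  I must verify that the submatrix appearing as the new $(2,2)$ block after each peel is \emph{genuinely} $\tilde{\bd{S}}_{N-m}$ regarded as a submatrix of the original $\bd{S}$ --- not of some intermediate Schur complement --- so that the $\bm{\alpha}$ produced at each step is the one indexed by $\bd{S}$ as in~(\ref{eq:alphas}).  This is immediate from the nesting $\tilde{\bd{S}}_{k-1}\subset\tilde{\bd{S}}_k$.  I must also confirm that the first block-row and first block-column of $\tilde{\bd{S}}_{N-m+1}$, with the corner $\bd{S}_{mm}$ excised, coincide with the vectors $\bm{\sigma}^T_{m,m+1}$ and $\bd{s}_{m+1,m}$ defined in~(\ref{eq:vecs}).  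Once these identifications are in hand, the theorem follows from $N-1$ applications of~(\ref{eq:2x2det}).
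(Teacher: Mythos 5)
Your argument is correct, and it reaches the theorem by a genuinely more economical route than either of the paper's two proofs. The paper's first proof constructs a global unit lower-triangular auxiliary matrix $\bd{U}$, solves the $(N^2-N)/2$ constraint equations to make $\bd{SU}$ block upper triangular, and identifies the resulting diagonal blocks as Schur complements; it then invests substantial effort (the Banachiewicz identity computation) in converting the explicit form (\ref{eq:alphas}) into the recursion (\ref{eq:alphas2}). The paper's second proof runs an induction on the chain of \emph{derived} matrices $\bm{\alpha}^{(0)}, \bm{\alpha}^{(1)}, \ldots$, each obtained from the previous one as a Schur complement, and so proves the theorem for the recursively defined $\bm{\alpha}$'s of (\ref{eq:alphas3}). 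You instead chain the $2\times 2$ formula (\ref{eq:2x2det}) over the nested trailing submatrices $\tilde{\bd{S}}_N \supset \tilde{\bd{S}}_{N-1} \supset \cdots \supset \tilde{\bd{S}}_1$ of the \emph{original} matrix, peeling the top-left corner off each; since every Schur complement produced this way is literally $\bd{S}_{mm} - \bm{\sigma}^T_{m,m+1}\tilde{\bd{S}}^{-1}_{N-m}\bd{s}_{m+1,m} = \bm{\alpha}^{(N-m)}_{mm}$ in the explicit form (\ref{eq:alphas}) of the theorem statement, no recursion and no Banachiewicz inversion are needed, and the index checks you flag are indeed immediate. What your proof buys is brevity and directness for the theorem exactly as stated; what it does not deliver is the recursion (\ref{eq:alphas2}) relating $\bm{\alpha}^{(k+1)}$ to $\bm{\alpha}^{(k)}$, which is not part of the statement but is the computational engine the paper relies on in its applications (Sec.~\ref{sec:examples}), so the paper's extra work is not wasted --- it just proves more than the theorem asks for. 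Your invertibility bookkeeping (every $\tilde{\bd{S}}_k$, $k=1,\ldots,N-1$, invertible) is precisely what is already required for (\ref{eq:alphas}) to make sense, so no new hypothesis is smuggled in.
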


\begin{proof}
In order to facilitate computing the determinant of $\bd{S}$, we define a lower triangular auxiliary matrix
\beq
\bd{U} = \begin{bmatrix}
		\bd{I} & \bd{0} & \cdots & \bd{0} \\
		\bd{U}_{21} & \bd{I} & \cdots & \bd{0} \\
		\vdots & \vdots & \ddots & \vdots \\
		\bd{U}_{N1} & \bd{U}_{N2} & \cdots & \bd{I}
	 \end{bmatrix}   .
\eeq
Forming the product $\bd{SU}$ gives
\beq
\bd{SU} = \begin{bmatrix}
		\bd{S}_{11} + \bd{S}_{12} \bd{U}_{21} + \cdots \bd{S}_{1N} \bd{U}_{N1} & \bd{S}_{12} + \bd{S}_{13} \bd{U}_{32} + \cdots \bd{S}_{1N} \bd{U}_{N2} & \cdots & \bd{S}_{1N} \\
		\bd{S}_{21} + \bd{S}_{22} \bd{U}_{21} + \cdots \bd{S}_{2N} \bd{U}_{N1} & \bd{S}_{22} + \bd{S}_{23} \bd{U}_{32} + \cdots \bd{S}_{2N} \bd{U}_{N2} & \cdots & \bd{S}_{2N} \\
		\vdots & \vdots & \ddots & \vdots \\
		\bd{S}_{N1} + \bd{S}_{N2} \bd{U}_{21} + \cdots \bd{S}_{NN} \bd{U}_{N1} & \bd{S}_{N2} + \bd{S}_{N3} \bd{U}_{32} + \cdots \bd{S}_{NN} \bd{U}_{N2} & \cdots & \bd{S}_{NN}
	  \end{bmatrix}   .   \nonumber \\   \label{eq:SUNxN}
\eeq
Now, let us assume that the blocks of $\bd{U}$ can be chosen such that the product $\bd{SU}$ is upper triangular.  In this case, we obtain $(N^2 - N)/2$ constraint equations (the lower triangular blocks of $\bd{SU} = \bd{0}$) with $(N^2 - N)/2$ unknowns (the blocks of $\bd{U}$).  The $N - k$ equations arising from the $k$th column of $\bd{SU}$ are
\beq
\bd{S}_{k+1,k} + \bd{S}_{k+1,k+1} \bd{U}_{k+1,k} + \cdots + \bd{S}_{k+1,N} \bd{U}_{Nk} & = & \bd{0}   ,   \nonumber  \\
\bd{S}_{k+2,k} + \bd{S}_{k+2,k+1} \bd{U}_{k+1,k} + \cdots + \bd{S}_{k+2,N} \bd{U}_{Nk} & = & \bd{0}   ,   \label{eq:set}  \\
\vdots && \nonumber   \\
\bd{S}_{N,k} + \hspace{3mm} \bd{S}_{N,k+1} \bd{U}_{k+1,k} + \hspace{0.5mm} \cdots + \hspace{3mm} \bd{S}_{NN} \bd{U}_{Nk} & = & \bd{0}   .   \nonumber
\eeq
In order to simplify our notation, we now define the block vectors $\bd{s}_{ij}$ (Eq.~\ref{eq:vecs})) and 
\beq
\bd{u}_{ij} = \left(\begin{array}{cccc} \bd{U}_{ij} & \bd{U}_{i+1,j} & \cdots & \bd{U}_{Nj} \end{array} \right)^T   ,
\eeq
which represent the portions of the $k$th columns of $\bd{S}$ and $\bd{U}$, respectively, which lie below, and inclusive of, the block with indices $(i,j)$.  We also let $\tilde{\bd{S}}_k$ represent the $k \times k$ block matrix formed from the lower-right corner of $\bd{S}$,
\beq
\tilde{\bd{S}}_k = \begin{bmatrix}
	\bd{S}_{N-k+1,N-k+1} & \bd{S}_{N-k+1,N-k+2} & \cdots & \bd{S}_{N-k+1,N} \\
	\bd{S}_{N-k+2,N-k+1} & \bd{S}_{N-k+2,N-k+2} & \cdots & \bd{S}_{N-k+2,N} \\
	\vdots & \vdots & \ddots & \vdots \\
	\bd{S}_{N,N-k+1} & \bd{S}_{N,N-k+2} & \cdots & \bd{S}_{N,N} \\
\end{bmatrix}   .
\eeq
With these definitions, we can rewrite Eqs. (\ref{eq:set}) in the matrix form:
\beq
\tilde{\bd{S}}_{N-k} \bd{u}_{k+1,k} = - \bd{s}_{k+1,k}   .
\eeq
Solving for the auxiliary block vector yields
\beq
\bd{u}_{k+1,k} = - \tilde{\bd{S}}^{-1}_{N-k} \bd{s}_{k+1,k}   .
\eeq
We now define $\bm{\sigma}^T_{ij}$ as the block row vector of $\bd{S}$ lying to the right, and inclusive of, the position $(i,j)$ (Eq. (\ref{eq:vecs})).  Inspecting Eq. (\ref{eq:SUNxN}), we can express the $k$th diagonal element of $\bd{SU}$ in the form
\beq
(\bd{SU})_{kk} & = & \bd{S}_{kk} + \bm{\sigma}^T_{k,k+1} \bd{u}_{k+1,k}   ,   \nonumber \\
	       & = & \bd{S}_{kk} - \bm{\sigma}^T_{k,k+1} \tilde{\bd{S}}^{-1}_{N-k} \bd{s}_{k+1,k}   ,   \nonumber \\
	       & = & \tilde{\bd{S}}_{N-k+1} / \bd{S}_{kk} , 
\eeq
where $\tilde{\bd{S}}_{N-k+1} / \bd{S}_{kk}$ denotes the Sch\"{u}r complement of $\tilde{\bd{S}}_{N-k+1}$ with respect to $\bd{S}_{kk}$.  Next, defining the matrices $\bm{\alpha}^{(k)}_{ij}$ as shown in Eq. (\ref{eq:alphas}), we write $(\bd{SU})_{kk} = \bm{\alpha}^{(N-k)}_{kk}$.  Since $\bd{SU}$ is upper triangular by design, the determinant is simply the product of the determinants of its diagonal blocks.  This, together with the fact that $\det(\bd{U}) = 1$ gives
\beq
\det(\bd{S}) = \prod^N_{k=1} \det (\bm{\alpha}^{(N-k)}_{kk})   .   \label{eq:det}
\eeq

In order to express $\det(\bd{S})$ in terms of the blocks of $\bd{S}$, we must now evaluate $\tilde{\bd{S}}^{-1}_k$, which appears in Eq. (\ref{eq:alphas}).  However, rather than approaching this problem directly, we will focus on the matrices $\bm{\alpha}^{(k)}_{ij}$, and seek to find a recursive relationship between matrices of consecutive values of $k$.  Thus, we begin by writing
\beq
\bm{\alpha}^{(k+1)}_{ij} & = & \bd{S}_{ij} - \bm{\alpha}^T_{i,N-k} \tilde{\bd{S}}^{-1}_{k+1} \bd{s}_{N-k,j}   \nonumber   \\
		       & = & \bd{S}_{ij} - \left[ \begin{smallmatrix} \bd{S}_{i,N-k} & \bm{\sigma}^T_{i,N-k+1} \end{smallmatrix} \right]
					\left[ \begin{smallmatrix} \bd{S}_{N-k,N-k} & \bm{\sigma}^T_{N-k,N-k+1} \\ \bd{s}_{N-k+1,N-k} & \tilde{\bd{S}}_{k} \end{smallmatrix} \right]^{-1}
				\left[\begin{smallmatrix} \bd{S}_{N-k,j} \\ \bd{s}_{N-k+1,j} \end{smallmatrix} \right]   ,   \nonumber  \\   \label{eq:intermediate}
\eeq
where we have partitioned the block vectors and matrix into sections of block length 1 and $N$.  Next, we evaluate the inverse matrix above by making use of the Banachiewic identity~\cite{Banachiewicz,Frazer}
\beq
\left[\begin{smallmatrix}
	\bd{A} & \bd{B} \\
	\bd{C} & \bd{D}
      \end{smallmatrix} \right]^{-1}
= \left[\begin{smallmatrix}
	 (\bd{A - BD}^{-1} \bd{C})^{-1} & - (\bd{A - BD}^{-1} \bd{C})^{-1} \bd{BD}^{-1} \\
	 -\bd{D}^{-1} \bd{C} (\bd{A - BD}^{-1} \bd{C})^{-1} & \bd{D}^{-1} \left[ \bd{I} + \bd{C} (\bd{A - BD}^{-1} \bd{C})^{-1} \bd{BD}^{-1} \right]
	\end{smallmatrix} \right]   .   \label{eq:Banachidentity}
\eeq
Identifying this expression with the partitioned form of $\tilde{\bd{S}}^{-1}_k$ in Eq. (\ref{eq:intermediate}), the Sch\"{u}r complement with respect to $\bd{S}_{N-k,N-k}$ becomes	
\beq
\bd{A - BD}^{-1} \bd{C} & = & \bd{S}_{N-k,N-k} - \bm{\sigma}^T_{N-k,N-k+1} \tilde{\bd{S}}^{-1}_{k} \bd{s}_{N-k+1,N-k}   \nonumber   \\
			& = & \bm{\alpha}^{(k)}_{N-k,N-k}   .
\eeq
Thus, evaluating the inverse matrix in Eq. (\ref{eq:intermediate}), we have
\beq
\bm{\alpha}^{(k+1)}_{ij} & = & \bd{S}_{ij} - \left[ \begin{smallmatrix} \bd{S}_{i,N-k} & \bm{\sigma}^T_{i,N-k+1} \end{smallmatrix} \right]   \nonumber \\
			 && \times  \left[ \begin{smallmatrix} (\bm{\alpha}^{(k)}_{N-k,N-k})^{-1} & - (\bm{\alpha}^{(k)}_{N-k,N-k})^{-1} \bm{\sigma}^T_{N-k,N-k+1} \tilde{\bd{S}}^{-1}_k \\ -\tilde{\bd{S}}^{-1}_k \bd{s}_{N-k+1,N-k} (\bm{\alpha}^{(k)}_{N-k,N-k})^{-1} & \tilde{\bd{S}}^{-1}_k \left[ \bd{I} + \bd{s}_{N-k+1,N-k} (\bm{\alpha}^{(k)}_{N-k,N-k})^{-1} \bm{\sigma}^T_{N-k,N-k+1} \tilde{\bd{S}}^{-1}_k \right] \end{smallmatrix} \right]   \nonumber   \\
			 && \times \left[\begin{smallmatrix} \bd{S}_{N-k,j} \\ \bd{s}_{N-k+1,j} \end{smallmatrix} \right]   ,   \nonumber   \\
			& = & \bd{S}_{ij} - \left[ \begin{smallmatrix} \left[ \bd{S}_{i,N-k} - \bm{\sigma}^T_{i,N-k+1} \tilde{\bd{S}}^{-1}_k \bd{s}_{N-k+1,N-k} \right] (\bm{\alpha}^{(k)}_{N-k,N-k})^{-1} \\ \bm{\sigma}^T_{i,N-k+1} \tilde{\bd{S}}^{-1}_k - \left[ \bd{S}_{i,N-k} - \bm{\sigma}^T_{i,N-k+1} \tilde{\bd{S}}^{-1}_k \bd{s}_{N-k+1,N-k}  \right] (\bm{\alpha}^{(k)}_{N-k,N-k})^{-1} \bm{\sigma}^T_{N-k,N-k+1} \tilde{\bd{S}}^{-1}_k  \end{smallmatrix} \right]   \nonumber   \\
			 && \hspace{10mm} \cdot \left[ \begin{smallmatrix} \bd{S}_{N-k,j} \\ \bd{s}_{N-k+1,j} \end{smallmatrix} \right]   ,   \nonumber   \\
			& = & \bd{S}_{ij} - \left[ \begin{smallmatrix} \bm{\alpha}^{(k)}_{i,N-k} (\bm{\alpha}^{(k)}_{N-k,N-k})^{-1} \\ \bm{\sigma}^T_{i,N-k+1} \tilde{\bd{S}}^{-1}_k - \bm{\alpha}^{(k)}_{i,N-k} (\bm{\alpha}^{(k)}_{N-k,N-k})^{-1} \bm{\sigma}^T_{N-k,N-k+1} \tilde{\bd{S}}^{-1}_k  \end{smallmatrix} \right] \cdot \left[ \begin{smallmatrix} \bd{S}_{N-k,j} \\ \bd{s}_{N-k+1,j} \end{smallmatrix} \right]   ,  \nonumber   \\
			& = & \bd{S}_{ij} - \bm{\alpha}^{(k)}_{i,N-k} (\bm{\alpha}^{(k)}_{N-k,N-k})^{-1} \bd{S}_{N-k,j} - \bm{\sigma}^T_{i,N-k+1} \tilde{\bd{S}}^{-1}_k \bd{s}_{N-k+1,j} \nonumber \\
			 && \hspace{10mm} + \bm{\alpha}^{(k)}_{i,N-k} (\bm{\alpha}^{(k)}_{N-k,N-k})^{-1} \bm{\sigma}^T_{N-k,N-k+1} \tilde{\bd{S}}^{-1}_k \bd{s}_{N-k+1,j}   ,   \nonumber \\
			& = & \left[ \bd{S}_{ij} - \bm{\sigma}^T_{i,N-k+1} \tilde{\bd{S}}^{-1}_k \bd{s}_{N-k+1,j} \right] \nonumber \\
			&& \hspace{10mm} - \bm{\alpha}^{(k)}_{i,N-k} (\bm{\alpha}^{(k)}_{N-k,N-k})^{-1} \left[ \bd{S}_{N-k,j} - \bm{\sigma}^T_{N-k,N-k+1} \tilde{\bd{S}}^{-1}_k \bd{s}_{N-k+1,j} \right]   ,   \nonumber \\
			& = & \bm{\alpha}^{(k)}_{ij} - \bm{\alpha}^{(k)}_{i,N-k} (\bm{\alpha}^{(k)}_{N-k,N-k})^{-1} \bm{\alpha}^{(k)}_{N-k,j}   .
\eeq
We can therefore express the $\bm{\alpha}^{(k)}_{ij}$ in the recursive form:
\beq
\bm{\alpha}^{(0)}_{ij} & = & \bd{S}_{ij}   ,   \nonumber \\
\bm{\alpha}^{(k+1)}_{ij} & = & \bm{\alpha}^{(k)}_{ij} - \bm{\alpha}^{(k)}_{i,N-k} (\bm{\alpha}^{(k)}_{N-k,N-k})^{-1} \bm{\alpha}^{(k)}_{N-k,j}   ,   \hspace{10mm} k \geq 1   .   \label{eq:alphas2}
\eeq
Given this recursive relationship, the matrix $\bm{\alpha}^{(0)}_{ij}$ can be read off directly from $\bd{S}$, and all higher order $\bm{\alpha}^{(k)}_{ij}$ can be calculated iteratively.  Finally, the determinant may be computed via Eq. (\ref{eq:det}).
\end{proof}

\section{Proof by Induction \label{sec:induction}}
Rather than calculating the determinant of $\bd{S}$ directly, as in the prior section, here we begin by establishing a recursive relationship between a sort of generalized Sch\"{u}r complement of $\bd{S}$.  To this end, we first prove the following lemma.

\begin{lemma}
Let $\bd{S}$ be a complex block matrix of the form
\beq
\bd{S} = \begin{bmatrix}
		\bd{S}_{11} & \bd{S}_{12} & \cdots & \bd{S}_{1N} \\
		\bd{S}_{21} & \bd{S}_{22} & \cdots & \bd{S}_{2N} \\
		\vdots  & \vdots  & \ddots & \vdots  \\
		\bd{S}_{N1} & \bd{S}_{N2} & \cdots & \bd{S}_{NN}
	 \end{bmatrix}   ,
\eeq
and let us define the set of block matrices $\left \{ \bm{\alpha}^{(0)}, \bm{\alpha}^{(1)}, \cdots, \bm{\alpha}^{(N-1)} \right \}$, where $\bm{\alpha}^{(k)}$ is an $(N - k) \times (N - k)$ block matrix with blocks
\beq
\bm{\alpha}^{(0)}_{ij} & = & \bd{S}_{ij}   \nonumber \\
\bm{\alpha}^{(k+1)}_{ij} & = & \bm{\alpha}^{(k)}_{ij} - \bm{\alpha}^{(k)}_{i,N-k} \left( \bm{\alpha}^{(k)}_{N-k,N-k} \right)^{-1} \bm{\alpha}^{(k)}_{N-k,j}   ,   \hspace{5mm} k \geq 1  .  \label{eq:alphas3}
\eeq
Then the determinants of consecutive $\bm{\alpha}^{(k)}$ are related via
\beq
\det(\bm{\alpha}^{(k)}) = \det(\bm{\alpha}^{(k+1)}) \det(\bm{\alpha}^{(k)}_{N-k,N-k})   .
\eeq
\end{lemma}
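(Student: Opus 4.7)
The plan is simply to recognize the recursion (\ref{eq:alphas3}) as a Schur-complement construction applied to the block matrix $\bm{\alpha}^{(k)}$, and then quote the $2 \times 2$ block-determinant identity (\ref{eq:2x2det}) from the introduction. Nothing more should be needed.

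In detail, I would first partition $\bm{\alpha}^{(k)}$, viewed as an $(N-k) \times (N-k)$ block matrix, into a coarser $2 \times 2$ block form whose bottom-right entry is the single block $\bd{D} = \bm{\alpha}^{(k)}_{N-k,N-k}$:
\begin{equation*}
\bm{\alpha}^{(k)} =
\begin{bmatrix} \bd{A} & \bd{B} \\ \bd{C} & \bd{D} \end{bmatrix},
\qquad
\bd{A}_{ij}=\bm{\alpha}^{(k)}_{ij},\quad
\bd{B}_{i}=\bm{\alpha}^{(k)}_{i,N-k},\quad
\bd{C}_{j}=\bm{\alpha}^{(k)}_{N-k,j},
\end{equation*}
for $i,j \in \{1,\dots,N-k-1\}$. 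With this grouping, the recursion (\ref{eq:alphas3}) reads
\begin{equation*}
\bm{\alpha}^{(k+1)}_{ij} = \bd{A}_{ij} - \bd{B}_{i}\bd{D}^{-1}\bd{C}_{j},
\end{equation*}
so $\bm{\alpha}^{(k+1)}$ is precisely the Schur complement $\bd{A} - \bd{B}\bd{D}^{-1}\bd{C}$ of $\bm{\alpha}^{(k)}$ with respect to its bottom-right block $\bm{\alpha}^{(k)}_{N-k,N-k}$.

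Applying the $2\times2$ block determinant identity (\ref{eq:2x2det}) to this partition immediately gives
\begin{equation*}
\det(\bm{\alpha}^{(k)})
= \det\!\bigl(\bd{A} - \bd{B}\bd{D}^{-1}\bd{C}\bigr)\,\det(\bd{D})
= \det(\bm{\alpha}^{(k+1)})\,\det\!\bigl(\bm{\alpha}^{(k)}_{N-k,N-k}\bigr),
\end{equation*}
which is the claimed identity. The only substantive hypothesis used is the invertibility of $\bm{\alpha}^{(k)}_{N-k,N-k}$, which is guaranteed by the blanket invertibility assumption adopted in Section~1 (and is in fact already implicit in the very definition (\ref{eq:alphas3}) of $\bm{\alpha}^{(k+1)}$).

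There is essentially no obstacle here: the content of the lemma is entirely in the correct identification of the indices in (\ref{eq:alphas3}) with a $2\times 2$ block Schur-complement structure. The only care required is bookkeeping — checking that the ranges $i,j \in \{1,\ldots,N-k-1\}$ in the definition of $\bm{\alpha}^{(k+1)}$ match the ``$\bd{A}$-block'' part of the partition, and that the single ``pivot'' block $\bm{\alpha}^{(k)}_{N-k,N-k}$ genuinely sits in the bottom-right corner of $\bm{\alpha}^{(k)}$, after which (\ref{eq:2x2det}) does all the work.
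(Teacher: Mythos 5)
Your proposal is correct and is essentially identical to the paper's own proof: the paper also partitions $\bm{\alpha}^{(k)}$ into a $2\times 2$ block form isolating $\bm{\alpha}^{(k)}_{N-k,N-k}$ in the bottom-right corner, identifies $\bm{\alpha}^{(k+1)}$ as the resulting Schur complement, and applies the identity (\ref{eq:2x2det}). No gaps; the invertibility remark matches the blanket assumption the paper adopts in its introduction.
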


\begin{proof}
Let us partition $\bm{\alpha}^{(k)}$ in the form
\beq
\bm{\alpha}^{(k)} = \left[ \begin{array}{ccc|c}
		\bm{\alpha}^{(k)}_{11} & \cdots & \bm{\alpha}^{(k)}_{1,N-k-1} & \bm{\alpha}^{(k)}_{1,N-k} \\
		\vdots  & \ddots  & \vdots & \vdots  \\
		\bm{\alpha}^{(k)}_{N-k-1,1} & \cdots & \bm{\alpha}^{(k)}_{N-k-1,N-k-1} & \bm{\alpha}^{(k)}_{N-k-1,N-k} \\   \hline
		\bm{\alpha}^{(k)}_{N-k,1} & \cdots & \bm{\alpha}^{(k)}_{N-k,N-k-1} & \bm{\alpha}^{(k)}_{N-k,N-k}
	        \end{array} \right]   ,
\eeq
and let us denote the upper-left block of $\bm{\alpha}^{(k)}$ by $\tilde{\bm{\alpha}}^{(k)}$.  The Sch\"{u}r complement of $\bm{\alpha}^{(k)}$ with respect to $\tilde{\bm{\alpha}}^{(k)}$ is then
\beq
\bm{\alpha}^{(k)} /\tilde{\bm{\alpha}}^{(k)} & = & \begin{bmatrix}
					\bm{\alpha}^{(k)}_{11} & \cdots & \bm{\alpha}^{(k)}_{1,N-k-1} \\
					\vdots  & \ddots  & \vdots \\
					\bm{\alpha}^{(k)}_{N-k-1,1} & \cdots & \bm{\alpha}^{(k)}_{N-k-1,N-k-1}
					\end{bmatrix} \nonumber \\
		&& \hspace{10mm}     -	\begin{bmatrix} \bm{\alpha}^{(k)}_{N-k,1} \cdots \bm{\alpha}^{(k)}_{N-k,N-k-1} \end{bmatrix}
					\begin{bmatrix} \bm{\alpha}^{(k)}_{N-k,N-k} \end{bmatrix}^{-1}
					\begin{bmatrix} \bm{\alpha}^{(k)}_{1,N-k} \\ \vdots \\ \bm{\alpha}^{(k)}_{N-k-1,N-k} \end{bmatrix}   ,   \nonumber \\
		& = & \bigg[ \bm{\alpha}^{(k)}_{ij} - \bm{\alpha}^{(k)}_{i,N-k} \left( \bm{\alpha}_{N-k,N-k} \right)^{-1} \bm{\alpha}^{(k)}_{N-k,j} \bigg]   ,   \nonumber  \\
		& = & \bm{\alpha}^{(k+1)}   .
\eeq
Applying the $2 \times 2$ block identity (Eq. (\ref{eq:2x2det})), we can therefore write the determinant of $\bm{\alpha}^{(k)}$ as
\beq
\det( \bm{\alpha}^{(k)} ) = \det( \bm{\alpha}^{(k+1)} ) \det( \bm{\alpha}^{(k)}_{N-k,N-k} )   .   \label{eq:lemma}
\eeq
\end{proof}

\begin{theorem}
Given a complex block matrix of the form (\ref{eq:Smatrix}), and the matrices $\alpha^{(k)}_{ij}$ defined in Eq. (\ref{eq:alphas3}), the determinant of $\bd{S}$ is given by
\beq
\det ( \bd{S} ) = \prod^N_{k=1} \det( \bm{\alpha}^{(N-k)}_{kk} )   .
\eeq
\end{theorem}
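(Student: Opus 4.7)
The plan is to combine the lemma with an induction (or equivalently, a telescoping product) on $k$. The key observation is that $\bm{\alpha}^{(0)}_{ij}=\bd{S}_{ij}$, so $\bm{\alpha}^{(0)}=\bd{S}$ as a block matrix, and therefore $\det(\bd{S})=\det(\bm{\alpha}^{(0)})$. From here, I will iterate the identity established in the lemma,
\[
\det(\bm{\alpha}^{(k)}) \;=\; \det(\bm{\alpha}^{(k+1)})\,\det(\bm{\alpha}^{(k)}_{N-k,N-k}),
\]
starting from $k=0$ and proceeding to $k=N-2$. Telescoping gives
\[
\det(\bd{S})\;=\;\det(\bm{\alpha}^{(N-1)})\,\prod_{k=0}^{N-2}\det(\bm{\alpha}^{(k)}_{N-k,N-k}).
\]

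Next, I would handle the base case of the recursion. The matrix $\bm{\alpha}^{(N-1)}$ is, by construction, a $1\times 1$ block matrix, consisting of the single block $\bm{\alpha}^{(N-1)}_{1,1}$. Consequently $\det(\bm{\alpha}^{(N-1)})=\det(\bm{\alpha}^{(N-1)}_{1,1})$, which fits neatly into the product as the $k=N-1$ factor. Substituting this in yields
\[
\det(\bd{S})\;=\;\prod_{k=0}^{N-1}\det(\bm{\alpha}^{(k)}_{N-k,N-k}).
\]

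Finally, I would perform the reindexing $j=N-k$ so that as $k$ runs from $0$ to $N-1$, the new index $j$ runs from $N$ down to $1$. Under this substitution, each factor becomes $\det(\bm{\alpha}^{(N-j)}_{j,j})$, and the product reorders to
\[
\det(\bd{S})\;=\;\prod_{j=1}^{N}\det(\bm{\alpha}^{(N-j)}_{j,j}),
\]
which is exactly the claimed formula.

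There is really no serious obstacle here; once the lemma is in hand, the proof is a short telescoping argument, and the only thing that needs mild care is bookkeeping: verifying that the lemma applies for every $k$ in the range $0\le k\le N-2$ (which requires the invertibility of the relevant pivots $\bm{\alpha}^{(k)}_{N-k,N-k}$, assumed throughout the paper) and that the $k=N-1$ factor arises from the single remaining block rather than from a further application of the lemma. If desired, the same argument can be phrased as induction on $N$, with the lemma supplying the inductive step by reducing $\det(\bm{\alpha}^{(k)})$ to a determinant of an $(N-k-1)\times(N-k-1)$ block matrix times a scalar determinant, but the telescoping presentation is more transparent.
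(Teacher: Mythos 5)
Your telescoping argument is exactly the paper's own proof: repeatedly apply the lemma starting from $\det(\bd{S})=\det(\bm{\alpha}^{(0)})$, terminate at the $1\times 1$ block matrix $\bm{\alpha}^{(N-1)}=\bm{\alpha}^{(N-1)}_{11}$, and reindex to obtain $\prod_{k=1}^{N}\det(\bm{\alpha}^{(N-k)}_{kk})$. The proposal is correct and takes essentially the same approach, with the same bookkeeping and the same standing invertibility assumptions.
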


\begin{proof}
Recall from the definition in Eq. (\ref{eq:alphas3}) that $\bm{\alpha}^{(0)} = \bd{S}$.  Starting with $\det(\bd{S})$ and applying Eq. (\ref{eq:lemma}) repeatedly yields
\beq
\det(\bd{S}) & = & \det( \bm{\alpha}^{(0)} )   ,   \nonumber \\
	     & = & \det (\bm{\alpha}^{(1)}) \det( \bm{\alpha}^{(0)}_{NN} )   ,   \nonumber \\
	     & = & \det (\bm{\alpha}^{(2)}) \det( \bm{\alpha}^{(1)}_{N-1,N-1} ) \det( \bm{\alpha}^{(0)}_{NN} )   ,   \nonumber \\
	     &&    \vdots \\
	     & = & \det (\bm{\alpha}^{(N-1)}) \det(\bm{\alpha}^{(N-2)}_{22}) \cdots \det( \bm{\alpha}^{(0)}_{NN} )   ,
\eeq
where the sequence of equalities terminates because $\bm{\alpha}^{(N-1)}$ is a $1 \times 1$ block matrix, which cannot be partitioned further.  Thus, writing $\bm{\alpha}^{(N-1)} = \bm{\alpha}^{(N-1)}_{11}$ we obtain the result
\beq
\det ( \bd{S} ) = \prod^N_{k=1} \det( \bm{\alpha}^{(N-k)}_{kk} )   .
\eeq

\end{proof}

\section{Applications \label{sec:examples}}
Having derived an expression for the determinant of an $N \times N$ complex block matrix, it will be useful to examine the result for a few specific values of $N$.  We choose to consider $N = 2$ and $N = 3$, as the first is the well-known result of Eq. (\ref{eq:2x2det}) and the second gives a clear picture of what sort of objects our result actually involves.  Larger values of $N$ quickly become cumbersome to write down, but the procedure for their calculation will be made clear.  Lastly, we present a ``real world" application of our result by calculating the determinant of a $48 \times 48$ matrix, which arises in the study of quark matter.

\subsection{$2 \times 2$ Block Matrices}
In the case $N = 2$, Eq. (\ref{eq:det}) reduces to
\beq
\det(\bd{S}) = \det(\bm{\alpha}^{(1)}_{11}) \det(\bm{\alpha}^{(0)}_{22})   ,   \label{eq:2x2det2}
\eeq
while Eqs. (\ref{eq:alphas2}) reduce to
\beq
\bm{\alpha}^{(0)}_{ij} = \bd{S}_{ij}   \hspace{10mm}   \bm{\alpha}^{(1)}_{ij} = \bd{S}_{ij} - \bd{S}_{i2} \bd{S}^{-1}_{22} \bd{S}_{21}   .
\eeq
Thus, we obtain the result
\beq
\det(\bd{S}) = \det(\bd{S}_{11} - \bd{S}_{12} \bd{S}^{-1}_{22} \bd{S}_{21}) \det(\bd{S}_{22})   .
\eeq
Note that this expression is identical to Eq. (\ref{eq:2x2block}).  It is instructive to combine the two determinants in this expression and rewrite it in the alternate forms
\beq
\det (\mathbf{S}) & = & \det( \bd{S}_{11} \bd{S}_{22} - \bd{S}_{12} \bd{S}^{-1}_{22} \bd{S}_{21} \bd{S}_{22} )   ,   \nonumber   \\
		  & = & \det( \bd{S}_{22} \bd{S}_{11} - \bd{S}_{22} \bd{S}_{12} \bd{S}^{-1}_{22} \bd{S}_{21} )   .
\eeq
In these forms, it is clear that when either $\bd{S}_{12}$ or $\bd{S}_{21}$ commute with $\bd{S}_{22}$ our expression reduces to:
\beq
&&  \bd{S}_{12} \bd{S}_{22} = \bd{S}_{22} \bd{S}_{12} \hspace{5mm} : \hspace{5mm}   \det(\bd{S}) = \det(\bd{S}_{22} \bd{S}_{11} - \bd{S}_{12} \bd{S}_{21})   ,   \nonumber \\
&&  \bd{S}_{21} \bd{S}_{22} = \bd{S}_{21} \bd{S}_{21} \hspace{5mm} : \hspace{5mm}   \det(\bd{S}) = \det(\bd{S}_{11} \bd{S}_{22} - \bd{S}_{12} \bd{S}_{21})   .   \label{eq:2x2result}
\eeq
Alternatively, for anti-commuting matrices, which often arise in the study of fermionic systems, the sign in (\ref{eq:2x2result}) becomes positive.

\subsection{$3 \times 3$ Block Matrices}
In the case $N = 3$, Eq. (\ref{eq:det}) reduces to
\beq
\det(\bd{S}) = \det(\bm{\alpha}^{(2)}_{11}) \det(\bm{\alpha}^{(1)}_{22}) \det(\bm{\alpha}^{(0)}_{33})   ,
\eeq
while Eqs. (\ref{eq:alphas2}) reduce to
\beq
& \bm{\alpha}^{(0)}_{ij} = \bd{S}_{ij} ,  \hspace{10mm}   \bm{\alpha}^{(1)}_{ij} = \bd{S}_{ij} - \bd{S}_{i3} \bd{S}^{-1}_{33} \bd{S}_{31} ,  &   \nonumber   \\
& \bm{\alpha}^{(2)}_{ij} = \left[ \bd{S}_{ij} - \bd{S}_{i3} \bd{S}^{-1}_{33} \bd{S}_{3j} \right] \hspace{90mm} & \nonumber \\
&- \left[ \bd{S}_{i2} - \bd{S}_{i3} \bd{S}^{-1}_{33} \bd{S}_{32} \right] \left[ \bd{S}_{22} - \bd{S}_{23} \bd{S}^{-1}_{33} \bd{S}_{32} \right]^{-1} \left[ \bd{S}_{2j} - \bd{S}_{23} \bd{S}^{-1}_{33} \bd{S}_{3j} \right] . &   \nonumber \\
\eeq
Thus, we obtain the result
\beq
\det(\bd{S}) & = & \det \bigg( \left[ \bd{S}_{11} - \bd{S}_{13} \bd{S}^{-1}_{33} \bd{S}_{31} \right] \nonumber \\
	     && \hspace{5mm} - \left[ \bd{S}_{12} - \bd{S}_{13} \bd{S}^{-1}_{33} \bd{S}_{32} \right] \left[ \bd{S}_{22} - \bd{S}_{23} \bd{S}^{-1}_{33} \bd{S}_{32} \right]^{-1} \left[ \bd{S}_{21} - \bd{S}_{23} \bd{S}^{-1}_{33} \bd{S}_{31} \right] \bigg)   \nonumber   \\
	     && \hspace{5mm} \times \det \left( \bd{S}_{22} - \bd{S}_{23} \bd{S}^{-1}_{33} \bd{S}_{32} \right) \det(\bd{S}_{33})   .
\eeq
Analogously to the $N = 2$ case, the commutation of certain blocks (e.g., $\bd{S}_{33}$ and $\bd{S}_{3j}$, $\alpha^{(1)}_{12}$ with $\alpha^{(1)}_{22}$) allows this expression to be simplified.

\subsection{Eigenenergies of high-density quark matter}

Having considered the general form of the determinant of $2 \times 2$ and $3 \times 3$ block matrices, we now consider a true application of our result by calculating the eigenenergies of quark matter in the two flavor Nambu--Jona-Lasinio model~\cite{Rossner,Klevansky,Buballa}.  In this model, the energies are the roots of the equation $\det{\bd{S}} = 0$, where
\beq
\bd{S} = \begin{bmatrix}
		\slashed{k} + \mu \gamma^0 - M & \Delta \gamma_5 \tau_2 \lambda_2 \\
		- \Delta^\ast \gamma_5 \tau_2 \lambda_2 & \slashed{k} - \mu \gamma^0 - M   \label{eq:NJL}
	 \end{bmatrix}   ,
\eeq
and where $M$ is the effective quark mass, $\mu$ is the chemical potential, $\Delta$ is the pairing gap, $\slashed{k} \equiv E \gamma^0 - \bm{\gamma} \cdot \mathbf{k}$, where $\bd{k} = (k_x, k_y, k_z)^T$ is the quark momentum and the $\gamma^\nu$ ($\nu = 0...3$) are the $4 \times 4$ Dirac matrices:
\beq
\gamma^0 = \begin{bmatrix} \bd{I} & \bd{0} \\ \bd{0} & -\bd{I} \end{bmatrix}   ,   \hspace{10mm}
\gamma^i = \begin{bmatrix} \bd{0} & \bm{\sigma}_i \\ -\bm{\sigma}_i & \bd{0} \end{bmatrix}   ,
\eeq
with $\bm{\sigma} = (\sigma_x, \sigma_y, \sigma_z)^T$ representing the vector of Pauli matrices:
\beq
\sigma_x = \begin{bmatrix} 0 & 1 \\ 1 & 0 \end{bmatrix}   ,   \hspace{10mm}
\sigma_y = \begin{bmatrix} 0 & -i \\ i & 0 \end{bmatrix}  ,   \hspace{10mm}
\sigma_z = \begin{bmatrix} 1 & 0 \\ 0 & -1 \end{bmatrix}  .
\eeq
In addition, $\gamma_5 \equiv i \gamma^0 \gamma^1 \gamma^2 \gamma^3$, $\tau_2$ is the second Pauli matrix in flavor space, and $\lambda_2$ is the second Gell-Mann matrix in color space:
\beq
\lambda_2 = \begin{bmatrix}
		0 & -i & 0 \\
		i & 0 & 0 \\
		0 & 0 & 0
	    \end{bmatrix}   .
\eeq
Thus, while we have written Eq. (\ref{eq:NJL}) in $2 \times 2$ block form, each block is itself a $24 \times 24$ matrix (2 (flavor) $\times$ 3 (color) $\times$ 4 (Dirac)).

Before constructing the determinant from Eq. (\ref{eq:det}), we must choose how we wish to partition $\bd{S}$.  We could, for instance, choose to begin with the $2 \times 2$ block form shown in Eq. (\ref{eq:NJL}), in which case the $\bm{\alpha}^{(k)}$ would be $24 \times 24$ matrices, which we would partition further, repeating the process until we have eliminated all indices.  While this choice has the advantage of requiring the construction of only a single $\bm{\alpha}^{(k)}$ ($\bm{\alpha}^{(1)}$) for the first step (Eq. (\ref{eq:2x2det2})), the price is that correspondingly more steps are required to finally obtain $\det{\bd{S}}$.

As a middle ground, balancing the number of $\bm{\alpha}^{(k)}$'s which must be constructed in each step with the number of steps, we will partition $\bd{S}$ into a $6 \times 6$ block matrix, with each block of size $8 \times 8$.  We achieve this partitioning by writing out the color indices explicitly, while leaving the Dirac and flavor indices intact.  Thus, the non-zero blocks of $\bd{S}$ become
\beq
\bd{S}_{11} & = & \bd{S}_{22} = \bd{S}_{33} = \slashed{k} + \mu \gamma^0 - M   ,   \nonumber   \\
\bd{S}_{44} & = & \bd{S}_{55} = \bd{S}_{66} = \slashed{k} - \mu \gamma^0 - M  ,   \nonumber   \\
\bd{S}_{24} & = & - \bd{S}_{15} = i \Delta \gamma_5 \tau_2    ,   \\
\bd{S}_{42} & = & - \bd{S}_{51} = i \Delta^\ast \gamma_5 \tau_2    .   \nonumber
\eeq

We now must construct $\bm{\alpha}^{(1)} \cdots \bm{\alpha}^{(5)}$.  From Eq. (\ref{eq:alphas2}), we see that $\alpha^{(k+1)}_{ij}$ will be equal to $\alpha^{(k)}_{ij}$ unless both $\bm{\alpha}^{(k)}_{i,N-k}$ and $\bm{\alpha}^{(k)}_{N-k,j}$ are non-zero.  As a result, since $\bd{S}_{i6} = \bd{S}_{6j} = \bd{0}$ for $i,j \neq 6$, we find
\beq
\bm{\alpha}^{(1)}_{ij} = \bd{S}_{ij}   ,   \hspace{5mm}   1 \leq i,j \leq 5   .
\eeq
Next, we note that $\bd{S}_{15}$ and $\bd{S}_{51}$ are the only non-zero blocks with a row or column index of 5.  Thus, the only $\bm{\alpha}^{(2)}_{ij}$ which differs from $\bm{\alpha}^{(1)}_{ij}$ is
\beq
\bm{\alpha}^{(2)}_{11} = \bd{S}_{11} - \bd{S}_{15} \bd{S}^{-1}_{55} \bd{S}_{51}   .   \label{eq:alpha211}
\eeq
A straightforward application of the Banachiewicz identity (Eq. (\ref{eq:Banachidentity})) yields $\bd{S}^{-1}_{55}$:
\beq
\bd{S}^{-1}_{55} = \frac{\slashed{k} - \mu \gamma^0 + M}{(E - \mu)^2 - E^2_k}   ,
\eeq
where $E_k \equiv \sqrt{\bd{k}^2 + M^2}$.  Substituting this expression into Eq. (\ref{eq:alpha211}) yields
\beq
\bm{\alpha}^{(2)}_{11} & = & \slashed{k} + \mu \gamma^0 - M - (-i \Delta \gamma_5 \tau_2) \left[ \frac{\slashed{k} - \mu \gamma^0 + M}{(E - \mu)^2 - E^2_k} \right] (-i \Delta \gamma_5 \tau_2)   ,   \nonumber   \\
		       & = & \slashed{k} + \mu \gamma^0 - M -  |\Delta|^2 \hspace{.5mm} \frac{\slashed{k} - \mu \gamma^0 - M}{(E - \mu)^2 - E^2_k}   ,   \label{eq:taugone}
\eeq
where we have used the fact that $\tau^2_2 = \bd{I}$ and $\gamma_5 \gamma^\mu = -\gamma^\mu \gamma_5$.  Meanwhile, the rest of the $\bm{\alpha}^{(2)}_{ij} = \bm{\alpha}^{(1)}_{ij} = \bd{S}_{ij}$, so that
\beq
\bm{\alpha}^{(2)}_{ij} = \begin{cases}
				\slashed{k} + \mu \gamma^0 - M -  |\Delta|^2 \hspace{.5mm} \frac{\slashed{k} - \mu \gamma^0 - M}{(E - \mu)^2 - E^2_k}   & \mbox{if } i,j = 1,  \\
				\bd{S}_{ij} & \mbox{else } (1 \leq i,j \leq 4)  .
			 \end{cases}
\eeq
Constructing $\bm{\alpha}^{(3)}$, we note that $\bm{\alpha}^{(2)}_{24}$ and $\bd{S}_{42}$ are the only non-zero blocks in the fourth row or column of $\bm{\alpha}^{(2)}$.  As a result, we find
\beq
\bm{\alpha}^{(3)}_{22} & = & \bm{\alpha}^{(2)}_{22} - \bm{\alpha}^{(2)}_{24} \left( \bm{\alpha}^{(2)}_{44} \right)^{-1} \bm{\alpha}^{(2)}_{42}   ,   \nonumber   \\
		       & = & \bd{S}_{11} - (-\bd{S}_{15}) (\bd{S}_{55})^{-1} (-\bd{S}_{51})   ,   \nonumber   \\
		       & = & \bd{S}_{11} - \bd{S}_{15} \bd{S}^{-1}_{55} \bd{S}_{51}   ,   \nonumber   \\
		       & = & \bm{\alpha}^{(2)}_{11}   .
\eeq
The rest of the $\bm{\alpha}^{(3)}_{ij}$ are equal to $\bm{\alpha}^{(2)}_{ij}$ so we find
\beq
\bm{\alpha}^{(3)}_{ij} = \begin{cases}
				\slashed{k} + \mu \gamma^0 - M -  |\Delta|^2 \hspace{.5mm} \frac{\slashed{k} - \mu \gamma^0 - M}{(E - \mu)^2 - E^2_k}   & \mbox{if } i,j = 1 \mbox{ or } i,j = 2,  \\
				\bd{S}_{ij} & \mbox{else } (1 \leq i,j \leq 3)  .
			 \end{cases}
\eeq
Since $\bm{\alpha}^{(3)}_{33} = \bd{S}_{33}$ is the only non-zero block in the third row/column, we have
\beq
\bm{\alpha}^{(4)}_{ij} = \bm{\alpha}^{(3)}_{ij}   ,   \hspace{5mm}   1 \leq i,j \leq 2   .
\eeq
Finally, since $\bm{\alpha}^{(4)}_{12} = \bm{\alpha}^{(4)}_{21} = \bd{S}_{12} = \bd{S}_{21} = \bd{0}$, we find that the sole block of $\bm{\alpha}^{(5)}$ is
\beq
\bm{\alpha}^{(5)}_{11} & = & \bm{\alpha}^{(4)}_{11} = \bm{\alpha}^{(3)}_{11}   ,   \nonumber \\
		       & = & \slashed{k} + \mu \gamma^0 - M -  |\Delta|^2 \hspace{.5mm} \frac{\slashed{k} - \mu \gamma^0 - M}{(E - \mu)^2 - E^2_k}   .
\eeq
Having constructed the necessary determinants, Eq. (\ref{eq:det}) gives
\beq
\det(\bd{S}) & = & \det ( \bm{\alpha}^{(5)}_{11} ) \det( \bm{\alpha}^{(4)}_{22} ) \det( \bm{\alpha}^{(3)}_{33} ) \det( \bm{\alpha}^{(2)}_{44} ) \det( \bm{\alpha}^{(1)}_{55} ) \det( \bm{\alpha}^{(0)}_{66} )   ,   \nonumber \\
	     & = & \det_{f,D} \left( \slashed{k} + \mu \gamma^0 - M - |\Delta|^2 \hspace{.5mm} \frac{\slashed{k} - \mu \gamma^0 - M}{(E - \mu)^2 - E^2_k} \right)^2 \det_{f,D} ( \slashed{k} + \mu \gamma^0 - M)   \nonumber \\
	     && \hspace{10mm} \times  \det_{f,D} ( \slashed{k} - \mu \gamma^0 - M)^3   ,
\eeq
where the subscript of $f,D$ on the determinant indicates that the remaining determinant is to be taken over flavor and Dirac indices.

Thus, we have eliminated 6 of the original 48 indices, and have only the Dirac and flavor indices remaining.  In fact, the flavor indices are now trivial, having vanished upon squaring the matrix $\tau_2$ (see Eq. (\ref{eq:taugone})), so that
\beq
\det(\bd{S}) & = & \det_{D} \left( \slashed{k} + \mu \gamma^0 - M - |\Delta|^2 \hspace{.5mm} \frac{\slashed{k} - \mu \gamma^0 - M}{(E - \mu)^2 - E^2_k} \right)^4 \det_{D} ( \slashed{k} + \mu \gamma^0 - M)^2   \nonumber   \\
	     && \hspace{10mm} \times \det_{D} ( \slashed{k} - \mu \gamma^0 - M)^6   .   \label{eq:almost}
\eeq
Computing $\det (\slashed{k} \pm \mu \gamma^0 - M)$ yields
\beq
\det (\slashed{k} \pm \mu \gamma^0 - M) & = & \det \begin{bmatrix} (E \pm \mu) - M & - \bm{\sigma} \cdot \bd{k} \\
								\bm{\sigma} \cdot \bd{k} & - (E \pm \mu) - M
						\end{bmatrix}   ,   \nonumber   \\
				    & = & \left[ - (E \pm \mu)^2 + M^2 + (\sigma \cdot \bd{k})^2 \right]^2   ,   \nonumber   \\
				    & = & \left[ (E \pm \mu)^2 - E^2_k \right]^2  ,   \nonumber \\
				    & = & \left[ E + (E_k \pm \mu) \right]^2 \left[E - (E_k \mp \mu) \right]^2   ,   \label{eq:part1}
\eeq
where we have used the fact that $(\bm{\sigma} \cdot \bd{k})^2 = \bd{k}^2$.

Finally, computing the remaining determinant from Eq. (\ref{eq:almost}), we find
\beq
&& \det \left( \slashed{k} + \mu \gamma^0 - M -  |\Delta|^2 \hspace{.5mm} \frac{\slashed{k} - \mu \gamma^0 - M}{(E - \mu)^2 - E^2_k} \right)   \nonumber \\
&& \hspace{10mm} = \left \{ - \left[(E + \mu) - |\Delta|^2 \frac{E - \mu}{(E - \mu)^2 - E^2_k} \right]^2 + \left[ 1 - \frac{|\Delta|^2}{(E - \mu)^2 - E^2_k} \right]^2 E^2_k \right \}^2   ,   \nonumber   \\
&& \hspace{10mm} = \frac{ [ E^2 - (E_k + \mu)^2 - |\Delta|^2 ]^2 [ E^2 - (E_k - \mu)^2 - |\Delta|^2 ]^2}{(E - E_k - \mu)^2 (E + E_k - \mu)^2}   .   \label{eq:part2}
\eeq
Inserting Eq. (\ref{eq:part1}) and (\ref{eq:part2}) into Eq. (\ref{eq:almost}), we obtain the result
\beq
\det(\bd{S}) & = & \left[ E + \sqrt{ (E_k + \mu)^2 + |\Delta|^2 } \right]^8 \left[ E + \sqrt{ (E_k - \mu)^2 + |\Delta|^2 } \right]^8   \nonumber   \\
	     && \hspace{10mm} \times \left[ E - \sqrt{ (E_k + \mu)^2 + |\Delta|^2 } \right]^8 \left[ E - \sqrt{ (E_k - \mu)^2 + |\Delta|^2 } \right]^8   \nonumber   \\
	     && \hspace{10mm} \times (E + E_k + \mu)^4 (E - E_k - \mu)^4 (E + E_k - \mu)^4 (E - E_k + \mu)^4   .   \nonumber \\
\eeq
Finally, then, we can read off the eigenenergies, which are the absolute values of the roots of $\det(\bd{S}) = 0$:
\beq
E_1 & = & | E_k + \mu |      \hspace{30mm} (\mbox{multiplicity } 8) , \nonumber   \\
E_2 & = & | E_k - \mu |     \hspace{30mm} (\mbox{multiplicity } 8) , \nonumber   \\
E_3 & = & \sqrt{ (E_k + \mu)^2 + |\Delta|^2 }   \hspace{11mm} (\mbox{multiplicity } 16)   ,   \\
E_4 & = & \sqrt{ (E_k - \mu)^2 + |\Delta|^2 }   \hspace{11mm} (\mbox{multiplicity } 16)   .   \nonumber
\eeq
Indeed, these are the correct eigenenergies, as reported previously by Rossner~\cite{Rossner}.

\section{Acknowledgements}
This research was supported in part by NSF grant PHY09-69790.  The author would like to thank Gordon Baym for his guidance throughout the research which inspired this work.

\end{document}